\DeclareMathOperator{\im}{im}
\newtheorem{Thm}{Theorem}
\newtheorem{Prop}[Thm]{Proposition}
\newtheorem{Lem}[Thm]{Lemma}
\newtheorem{Cor}[Thm]{Corollary}
\theoremstyle{definition}
\newtheorem{Def}[Thm]{Definition}
\newtheorem{Prob}[Thm]{Problem}
\newtheorem{Ques}[Thm]{Question}
\theoremstyle{remark}
\newtheorem{Rem}[Thm]{Remark}
\newtheorem{Ex}[Thm]{Example}
\theoremstyle{definition}
\title{Surface Slices and Homology spheres}
\author{Clayton McDonald}
\address{Renyi Institute
}
\email[Clayton McDonald]{claytkm@gmail.com}
\begin{document}

\maketitle

\begin{abstract}
%In this note, we complete the description of doubly slice odd pretzel knots given by Donald, Issa, and McCoy. In particular, w
We develop the theory of the diagrammatics of surface cross sections to prove that there are an infinite number of homology 3-spheres smoothly embeddable in a homology 4-sphere but not in a homotopy 4-sphere. Our primary obstruction comes from work of Taubes.
\end{abstract}
\vspace{0.3 in}
\section{Introduction}
A long standing question in low-dimensional topology asks which 3-manifolds embed in which 4-manifolds, in particular the 4-sphere.% which has connections to the theory of doubly slice knots. 
%Thanks to work of Freedman, the question is better understood in the topological category \ref{freedman}, but 

\begin{Prob}\cite[Problem 3.20]{kirbylist}
Under what conditions does a closed orientable 3-manifold smoothly imbed in $S^4$?
\end{Prob}

Budney and Burton tackled this problem by applying a variety of algebraic obstructions to the 11 tetrahedra census of manifolds \cite{BudneyBurton}. One more recent method for obtaining obstructions to 3-manifold embeddings has been to tweak existing gauge theoretic obstructions for studying the homology cobordism group  \cite{donald, Issamccoy}. 
However, all of these obstructions have generally been obstructions to embedding in homology spheres, rather than to embedding into the 4-sphere specifically. Understanding the difference between these two conditions is in some sense the hardest case of Kirby problem 3.20, as most of the existing obstructions to $S^4$ embeddings must vanish.

\begin{figure}
    \centering
    \includegraphics[width = 0.4\textwidth]{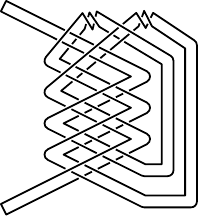}
    \caption{An example of a knot whose double branched cover embeds in a homology sphere but not a homotopy sphere.}
    \label{fig:final}
\end{figure}

\begin{Ques} \cite[Question 8.3]{BudneyBurton}
If a 3-manifold M admits a smooth embedding into a homotopy 4-sphere, does it admit a
smooth embedding (into) $S^4$? Are there 3-manifolds that embed in homology 4-spheres which do not embed in $S^4$?
\end{Ques}

In this paper, we establish a clear difference between these two properties, namely:

\begin{Thm}
\label{mainthm}
There exist an infinite number of integer homology 3-spheres that are embeddable in an integer homology 4-sphere but not in any homotopy 4-sphere.
\end{Thm}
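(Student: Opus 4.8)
\noindent\emph{Strategy of proof.}
The plan is to realize the examples as double branched covers $\Sigma_2(K_n)$ of an explicit family of knots, each obtained from a minimal bridge presentation of the torus knot $T_{3,6n\pm1}$ by inserting one full twist at each of its three bridges; for $n=1$ this produces the knot of Figure~\ref{fig:final}. One first checks, via a determinant computation, that $\det(K_n)=\pm1$, so that each $\Sigma_2(K_n)$ is an integer homology $3$-sphere; the twisting is arranged so that this holds. For the embedding into an integer homology $4$-sphere, the diagrammatic calculus of surface cross sections developed in this paper does the work: a bridge presentation naturally displays a knot as a cross section of a smoothly embedded surface in $S^4$ assembled by capping the two trivial bridge tangles off in the two $4$-balls, and inserting a full twist at a bridge alters this configuration by a move whose effect on the double branched cover is a surgery along a framed knot bounding a disk in an integer homology ball. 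Carrying this out shows that $K_n$ bounds a smoothly embedded disk $\Delta_n$ in an integer homology $4$-ball $W_n$ --- something the torus knot $T_{3,6n\pm1}$ itself cannot do, since $\Sigma(2,3,6n\pm1)$ does not bound an integer homology ball. Then $\Sigma_2(W_n,\Delta_n)$ is an integer homology $4$-ball with boundary $\Sigma_2(K_n)$, and its double is an integer homology $4$-sphere in which $\Sigma_2(K_n)$ sits as a separating hypersurface.

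The negative half is the crux, and it is where Daemi's obstruction enters. The subtlety to confront at the outset is that, because $\Sigma_2(K_n)$ already bounds an integer homology $4$-ball, it is trivial in the homology cobordism group, so \emph{every} homology cobordism invariant --- Daemi's $\Gamma$-invariant among them --- vanishes on it. Consequently the obstruction to embedding $\Sigma_2(K_n)$ in a \emph{homotopy} $4$-sphere cannot be a homology cobordism invariant at all; it must be strictly finer and must use the simple connectivity of the ambient manifold in an essential way, since an integer homology $4$-sphere and a homotopy $4$-sphere differ precisely in their fundamental groups. A Mayer--Vietoris argument shows that $\Sigma_2(K_n)$ embeds in a homotopy $4$-sphere if and only if it splits some simply connected integer homology $4$-sphere into two integer homology $4$-balls; one then argues that the twisting, although it kills the definite-form and $d$-invariant obstructions carried by $\Sigma_2(T_{3,6n\pm1})=\Sigma(2,3,6n\pm1)$, is controlled enough to preserve the relevant Chern--Simons/instanton data of that Brieskorn sphere, and that Daemi's (refined) instanton invariant then forbids such a splitting, because the constraints van Kampen's theorem imposes on the pieces of a simply connected $4$-manifold are incompatible with the resulting instanton bounds. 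Making this incompatibility precise --- that is, genuinely separating ``bounds a homology ball'' (which holds) from ``embeds in a homotopy sphere'' (which fails) --- is the main obstacle of the argument.

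Finally, to obtain infinitely many examples one lets $n$ vary: the instanton invariants of $\Sigma(2,3,6n\pm1)$ are pairwise distinct and unbounded, and since these are inherited up to controlled error by $\Sigma_2(K_n)$, the $\Sigma_2(K_n)$ are pairwise non-homeomorphic and the non-embedding conclusion persists for every $n$. (Alternatively, one could work with connected sums of a single example, provided the chosen invariant is additive enough to survive.)
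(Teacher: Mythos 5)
Your positive half is essentially the paper's: the examples are double branched covers of cross sections of a knotted sphere (here the spin of $T_{3,5}$), and such a cross section automatically splits the branched cover of the ambient homology $4$-sphere into two homology balls. (One small correction: the knots are not $T_{3,6n\pm1}$ with twisted bridges, but \emph{even symmetric unions} of the single knot $K=T_{3,5}$, i.e.\ $K\#\overline{K}$ with full twists inserted on the bands of its canonical ribbon disc; Proposition 3.4 shows every such knot is a slice of the spin of $K$, whose double branched cover is the spin of the Poincar\'e sphere.)

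The negative half, however, is where your proposal has a genuine gap, and you flag it yourself as ``the main obstacle.'' Your proposed substitute --- that the twisting preserves the Chern--Simons/instanton data of $\Sigma(2,3,q)$ ``up to controlled error'' and that Daemi's invariant then forbids the splitting --- cannot work as stated, for the reason you already identified: these covers bound integer homology balls, so every homology cobordism invariant (Fr\o yshov $h$, $d$, $\Gamma$, etc.) vanishes on them, and nothing is ``inherited.'' The paper's actual mechanism is relative, not absolute: the $\pm1$-twists are realized by $\pm1$-surgeries on meridians of the bands, whose lifts give a cobordism $W$ from $P\#\overline{P}=\Sigma_2(K\#\overline{K})$ to $Y=\Sigma_2(J)$; Lemma 4.1 shows $W$ can be made \emph{definite} (all twists of one sign) with diagonal intersection form, and Lemma 4.2 shows that by twisting enough bands one can force $\pi_1(W)\cong\pi_1(P)=\mathrm{SL}_2(\mathbb{F}_5)$. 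If $Y$ embedded in a homotopy sphere $S=X_1\cup_Y X_2$, then $W\cup_Y X_i$ would be a definite $4$-manifold with boundary $P\#\overline{P}$, and Daemi's theorem (Theorem 2.4) forces a nontrivial $\mathrm{SU}(2)$ representation of $\pi_1(W\cup_Y X_i)$ extending one from the boundary. The contradiction is then purely group-theoretic, using that $\mathrm{SL}_2(\mathbb{F}_5)$ has only $1$, $A_5$, and itself as quotients: if the image $G_i$ of $\pi_1(W)$ is trivial then $W\cup X_i$ is simply connected (it is normally generated by $G_i$ since $X_i\subset S$); if $G_i=A_5$ there is no nontrivial $\mathrm{SU}(2)$ representation extending from the boundary; and if both images are all of $\mathrm{SL}_2(\mathbb{F}_5)$, the van Kampen pushout for the simply connected singular space $W\cup X_1\cup X_2$ contradicts the normal form theorem for amalgamated free products. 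None of this case analysis, nor the key construction of the definite cobordism with controlled $\pi_1$, appears in your outline, and without it the appeal to Daemi does not close. Relatedly, infinitude in the paper comes from varying the symmetric unions over the fixed knot $T_{3,5}$, not from distinctness of instanton invariants of a family of Brieskorn spheres, which again would vanish here.
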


For the purposes of this paper, we work entirely in the smooth category. The outline of the paper is as follows:

In Section 2, we give a gauge theoretic obstruction to embedding a manifold $Y$ in $S^4$, modeled after a strategy of Donald using a theorem of Taubes. The obstruction requires a definite cobordism from $Y$ to a specific 3-manifold, as well as a specific $\pi_1$ condition on the cobordism. 

In Section 3, we construct our examples via branched covers of cross sectional slices $K$ of knotted spheres $F$. The knotted spheres in question are spun knots. One of the main advantages to this perspective is that there is a natural inclusion of cyclic branched covers $\Sigma_n(S^3,K) \subset \Sigma_n(S^4,F)$. This fact has been used to great effect as an obstruction to double slicing of knots, but in the case of our proof, we will use it to describe families of 3-manifolds embeddable in homology 4-spheres. Moreover, our construction comes with a canonical cobordism to a standard manifold as required by the obstruction.

In Section 4, we verify the $\pi_1$ condition as well as the definiteness of our canonical cobordism, and show how our $\pi_1$ condition can be used to prove the main theorem. We also verify that we have infinitely many examples using a hyperbolic geometry argument. 

%move bit about branched covers here

%%%
%%%
%%%
\section*{Acknowledgements.}  Thanks to my advisor Joshua Greene for his support. Thanks to Peter Feller, Tom Mark, Linh Troung, and Luya Wang for inspiring conversations during the AIM 4D Topology Workshop. Thanks to Antonio Alfieri for helpful comments in improving clarity. Thanks to Maggie Miller for pointing out an error in an earlier version of this paper. Thanks to Aliakbar Daemi and Chris Scaduto for clarifying the gauge theoretic side of things, and Tye Lidman for help with the hyperbolic geometry argument at the end. Thanks to Marco Marengon, Brendan Owens, and the anonymous referees for help with revision.

\section{The Gauge Theoretic Input} % Gauge theoretic input

%Most obstructions to the embedding of manifolds in $S^4$ factor through homology in some way (cite d invariants, homology). 

%bilinear pairing unimodular if hom sphere boundary - standard if not diagonalizible

Our primary obstruction will be modeled on a corollary of a result of Taubes, attributed to Akbulut. For the purposes of this statement, a standard intersection form is one that is diagonalizable over the integers.

\begin{Thm}\cite[Proposition 1.7]{taubes}
\label{thm:taubes}
Let $\Sigma$ denote any homology sphere bounding a simply connected 4-manifold with nonstandard definite intersection form. Then there is no simply connected definite manifold with boundary $\Sigma \hash \overline{\Sigma}$.

\end{Thm}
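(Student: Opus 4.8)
The plan is to reduce the statement to Taubes's diagonalization theorem for $4$-manifolds with periodic ends --- the engine behind \cite{taubes} --- by manufacturing a periodic end out of the hypothetical simply connected definite filling of $\Sigma \# \overline{\Sigma}$. (A naive attempt to assemble a closed definite $4$-manifold and invoke Donaldson's theorem alone runs into a sign obstruction: $\Sigma$ need not bound a definite $4$-manifold of the opposite sign, and undoing the connected sum leaves one exposed copy of $\Sigma$ with no admissible cap. This is precisely the subtlety that periodic-end gauge theory is built to handle.)

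I would begin with two harmless normalizations. Since orientation reversal distributes over connected sum, $\overline{\Sigma \# \overline{\Sigma}} \cong \overline{\Sigma} \# \Sigma \cong \Sigma \# \overline{\Sigma}$, so the statement is unaffected by swapping $\Sigma$ and $\overline{\Sigma}$; hence I may assume the given definite form $Q_W$ on $W$ is negative definite. Then I would argue by contradiction: suppose $X$ is a simply connected definite $4$-manifold with $\partial X = \Sigma \# \overline{\Sigma}$, and (replacing $X$ by $\overline{X}$ if necessary, again using $\overline{\Sigma \# \overline{\Sigma}} \cong \Sigma \# \overline{\Sigma}$) assume $Q_X$ is negative definite as well.

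Next I would build the periodic end. Let $C$ be the elementary cobordism from $\Sigma \sqcup \overline{\Sigma}$ to $\Sigma \# \overline{\Sigma}$ obtained by attaching a single $4$-dimensional $1$-handle to $(\Sigma \sqcup \overline{\Sigma}) \times [0,1]$ joining the two boundary components, and set $W_0 := X \cup_{\Sigma \# \overline{\Sigma}} \overline{C}$. Then $W_0$ is a smooth cobordism both of whose boundary components are copies of $\Sigma$, with $H_1(W_0) = 0$; a Mayer--Vietoris computation over the integral homology sphere $\Sigma \# \overline{\Sigma}$ identifies $H_2(W_0) \cong H_2(X)$ with intersection form $Q_{W_0} \cong Q_X$, negative definite. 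Stacking infinitely many copies of $W_0$ along $\Sigma$ produces a periodic end whose cross-section is the integral homology sphere $\Sigma$, and capping the remaining boundary component with $W$ (via an orientation-reversing identification with $\partial W$) yields an oriented $4$-manifold
\[
  Z_\infty \;=\; W \cup_\Sigma W_0 \cup_\Sigma W_0 \cup_\Sigma \cdots
\]
carrying that periodic end. A second Mayer--Vietoris computation shows that the truncation of $Z_\infty$ after $k$ copies of $W_0$ has intersection form $Q_W \oplus Q_X^{\oplus k}$, which is negative definite; so $Z_\infty$ is negative definite and carries an admissible periodic end (period an integral homology sphere, $b_1(W_0) = 0 = b_2^+(W_0)$). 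Taubes's theorem then forces the intersection form on the compact part of $Z_\infty$ to be diagonalizable over $\mathbb{Z}$; since $Q_W$ is an orthogonal direct summand of that form, $Q_W$ is diagonalizable --- contradicting the hypothesis that it is nonstandard.

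The hard part will be the last step: confirming that $W_0$ genuinely meets the precise admissibility hypotheses of the periodic-end diagonalization theorem of \cite{taubes}, and that its conclusion is sharp enough to constrain the direct summand $Q_W$ (this is where the integral-homology-sphere condition on $\Sigma$ and the vanishing $b_1(W_0) = b_2^+(W_0) = 0$ are used, and where the simple connectivity of $W$ and $X$ is needed). The remaining ingredients --- the two orientation-reversal reductions, the $1$-handle cobordism, and the homological bookkeeping --- should be routine.
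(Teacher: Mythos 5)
This statement is not proved in the paper; it is quoted verbatim as Proposition 1.7 of Taubes's end-periodic gauge theory paper, so there is no in-paper proof to compare against. What you have written is, in outline, a reconstruction of Taubes's own argument: manufacture a period piece $W_0 := X \cup_{\Sigma\#\overline\Sigma}\overline{C}$ with $H_1(W_0)=0$ and $Q_{W_0}\cong Q_X$ negative definite, stack it into an end-periodic manifold with cross-section the integral homology sphere $\Sigma$, cap with $W$, and run the end-periodic diagonalization theorem. Your two orientation normalizations are both valid (the freedom to replace $\Sigma$ by $\overline\Sigma$ normalizes $W$, and $\overline{\Sigma\#\overline\Sigma}\cong\Sigma\#\overline\Sigma$ normalizes $X$ independently), and you are right that these are needed: the naive "close up and apply Donaldson" attempt fails for exactly the sign reason you give, and this is the raison d'\^etre for the periodic-end machinery. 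The Mayer--Vietoris bookkeeping is also fine --- note in fact that $\overline{C}$ is $(\Sigma\#\overline\Sigma)\times I$ with a single $3$-handle attached along the separating $S^2$, from which $\pi_1(W_0)=1$ follows by van Kampen, so $b_1(W_0)=0$ is automatic.

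The one genuine caveat is the one you flag yourself: you are asserting, without verifying, that $W_0$ meets the admissibility hypotheses of Taubes's Theorem~1.4 and that its conclusion (diagonalizability of the form on compactly supported second cohomology of the end-periodic manifold, hence of the orthogonal summand $Q_W$) is stated in the generality you need. That is not a small gap to wave at --- it is precisely where the content of Taubes's theorem lives --- but since the paper under review treats this as a black-box citation, filling it in amounts to reading Taubes rather than to supplying a new idea. In short: correct approach, same as the cited source, with the one remaining step being to check the hypotheses of the periodic-end theorem rather than to invent anything further.
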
 
This theorem was recently generalized by Daemi \cite{daemi} to include a wider class of 3-manifolds (namely, any 3-manifold with nontrivial Fr\o yshov $h$-invariant \cite{Froyshov}). However, we will only use the Poincar\'e homology sphere for our proof, which has a well known nonstandard definite filling.

Using this theorem, Donald \cite{DonaldPhDThesis} noted in his thesis that one could obtain an obstruction to embedding into $S^4$:

\begin{Cor}\cite[Corollary 3.52]{DonaldPhDThesis}
Let $\Sigma$ be as above. If there exists a smooth definite simply-connected cobordism $W$ from $\Sigma \hash \overline{\Sigma}$ to a rational homology sphere $Y$, then $Y$ does not embed smoothly in $S^4$.
\end{Cor}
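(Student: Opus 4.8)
The plan is to argue by contradiction, using the cobordism $W$ together with a hypothetical embedding $Y \hookrightarrow S^4$ to manufacture a simply connected definite filling of $\Sigma \hash \overline{\Sigma}$, contradicting Theorem~\ref{thm:taubes}. First I would note that a smooth embedding of the rational homology sphere $Y$ into $S^4$ separates $S^4$ into two compact pieces $A$ and $B$ with $\partial A = Y = -\partial B$, each of which is a rational homology ball by a Mayer--Vietoris argument; in particular $b_2(A) = b_2(B) = 0$, so each has (trivially) definite intersection form. I would also check that $A$ and $B$ can be taken simply connected, or at least that one can glue along $Y$ without introducing $\pi_1$ that obstructs the Taubes hypothesis --- this is the point that needs care, since the theorem requires the closed-up manifold to be simply connected.

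Next, I would glue. The cobordism $W$ runs from $\Sigma \hash \overline{\Sigma}$ to $Y$; capping the $Y$ end with one of the rational homology balls, say $B$, produces a smooth 4-manifold $W' = W \cup_Y B$ with $\partial W' = \Sigma \hash \overline{\Sigma}$. By Novikov additivity (or a direct Mayer--Vietoris computation, using that $B$ is a rational homology ball) the intersection form of $W'$ agrees with that of $W$ up to the torsion irrelevant for definiteness, so $W'$ is definite. For simple connectivity I would use van Kampen: $W$ is simply connected by hypothesis, $B$ is simply connected (or made so), and $Y$ is connected, so $\pi_1(W') = 1$. Thus $W'$ is a simply connected definite 4-manifold with boundary $\Sigma \hash \overline{\Sigma}$, directly contradicting Theorem~\ref{thm:taubes}.

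The main obstacle I expect is the $\pi_1$ bookkeeping: Theorem~\ref{thm:taubes} is stated for \emph{simply connected} definite fillings, and while the complementary pieces $A, B$ of an embedded rational homology sphere in $S^4$ need not a priori be simply connected, one should be able to reduce to that case --- either by observing that Taubes' obstruction in fact only needs $b_1 = 0$ and a suitable restriction on the intersection form (which the rational homology ball trivially satisfies), or by a surgery argument killing $\pi_1(B)$ without changing the boundary or the (definite) intersection form. A secondary, routine point is confirming orientations: one must glue $B$ to $W$ along $Y$ with the orientation induced from $W$, which is exactly the orientation $Y$ inherits as a boundary component, and check that the resulting form is definite of the correct sign; since $B$ contributes nothing to $b_2$ this is automatic. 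Once these are settled, the contradiction with Theorem~\ref{thm:taubes} closes the argument.
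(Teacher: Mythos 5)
Your overall strategy (glue a complementary rational homology ball onto $W$ and contradict Theorem~\ref{thm:taubes}) is the paper's, but there is a genuine gap at exactly the place you flag as needing care, and neither of your proposed repairs works. The complementary pieces $A,B \subset S^4$ are \emph{not} simply connected in general, and you cannot simply assume it. Your first fix --- that Taubes' obstruction only needs $b_1=0$ --- is false: the gauge-theoretic argument genuinely uses the absence of nontrivial $\SU(2)$ representations, which is why the paper later imports Daemi's refinement when it wants to weaken the $\pi_1$ hypothesis. Your second fix --- surgery on circles in $B$ to kill $\pi_1(B)$ --- destroys definiteness, since surgery on a nullhomologous loop in a 4-manifold adds a hyperbolic pair (or $\langle 1\rangle\oplus\langle -1\rangle$) to the intersection form. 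So both escape routes are closed.

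The correct resolution, which is the point of the paper's short proof, is a van Kampen observation you did not make: because $S^4 = A \cup_Y B$ is simply connected, the image of $\pi_1(Y)$ must normally generate $\pi_1(A)$ (and likewise $\pi_1(B)$). Now glue $X = B$ to $W$ along $Y$. The map $\pi_1(Y)\to\pi_1(W\cup_Y X)$ factors through $\pi_1(W)=1$, so the image of $\pi_1(Y)$ in $\pi_1(W\cup_Y X)$ is trivial; but that image normally generates the $\pi_1(X)$ factor, so $\pi_1(W\cup_Y X)=1$ by van Kampen. You never need $X$ itself to be simply connected --- only that its fundamental group is normally generated by boundary loops, which is automatic from the embedding into $S^4$. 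With that in hand the rest of your argument (definiteness via the rational-homology-ball condition, then contradiction with Theorem~\ref{thm:taubes}) goes through as you wrote it.
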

%out of proof environment? donald's arg is as follows
\begin{proof}[Idea of proof]
Assume $Y$ embeds in $S^4$, and let $X$ be one of the components of $S^4\setminus Y$. Then by Seifert-van Kampen, $\pi_1(Y)$ normally generates $\pi_1(X)$. This means that $ \pi_1(W) = 1$ normally generates  $\pi_1(W \cup_Y X)$ so $W \cup_Y X$ is also simply connected. Furthermore, $X$ is a rational homology ball, so $W \cup_Y X$ is definite, which by Theorem \ref{thm:taubes} is a contradiction.
\end{proof}

%easy to use obstruction, but hard to simultaneously get other stuff - give examples of things 

As Donald notes, it is easy to construct manifolds that satisfy the conditions of this obstruction. For example, if you attach 2-handles to a generating set for the fundamental group of $\Sigma \hash \overline{\Sigma}$, then this describes a simply connected cobordism, and with the proper choice of surgery coefficients, it can be made definite as well. The main challenge with using this obstruction is constructing a manifold that both embeds in a homology sphere and exhibits this definite cobordism. 

For our proof, we will use the same general framework, but with a few important changes to the construction. The obstruction to embedding in $S^4$ in the above case is really just an obstruction to $\pi_1(Y)$ normally generating $\pi_1(X)$ for $X$ some homology ball it bounds. (We note that this makes the above obstruction actually an obstruction to embedding into a homotopy sphere rather than $S^4$ in particular). Therefore,  one would not be able to find a manifold obstructable via this method via branched covers of slice knots, as both knots in $S^3$ and knotted surfaces in $B^4$ have fundamental groups normally generated by meridians.

For the purposes of our proof, we will slightly relax the $\pi_1$ condition on the cobordism. We note that the cobordism does not have to be simply connected, as long as we can guarantee that it becomes simply connected once we cap it off. 

Moreover, we will use a strengthening of the $\pi_1$ condition of Taubes' result (Theorem \ref{thm:taubes}). This strengthening follows from the main theorem in Taubes' paper similarly to Theorem \ref{thm:taubes}, but is not stated in this form.

%Moreover, we will use a strengthening of Taubes' result due to Daemi \cite{daemi}. Taubes' result was generalized to include a wider class of manifolds in the place of $\Sigma$ (namely any manifold with nontrivial Fr\o yshov $h$ invariant \cite{Froyshov}), as well as a generalization on the $\pi_1$ condition. The following theorem follows from Theorems 3 and 5 in \cite{daemi}:

%Fr\o yshov
%talk about froyshov -talk about rephrasing
\begin{Thm}\label{thm:su2}
    Let $\Sigma$ denote any homology sphere bounding a simply connected 4-manifold $X$ with nonstandard definite intersection form. Then any definite manifold $W$ with boundary $\Sigma \hash \overline{\Sigma}$ must have a non-trivial $\text{SU}(2)$ representation of its fundamental group.
\end{Thm}

%that extends a non-trivial representation of $\pi_1(\Sigma \hash \overline{\Sigma})$
\begin{proof}
    Assume that there exists a definite $W$ with boundary $\Sigma \hash \overline{\Sigma}$ admitting no nontrivial $\text{SU}(2)$ representation. By attaching a 3-handle, we obtain a definite self cobordism $C$ of $\Sigma$ admitting no nontrivial $\text{SU}(2)$ representation. If we attach successive copies of $C$ to $X$, we obtain an end-periodic and admissible 4-manifold $M$ in the sense of Taubes. Then by Theorem 1.4 of \cite{taubes}, we know that $M$ has diagonalizible intersection form, which we know is impossible as it contains the intersection form of $X$ as a sublattice.
\end{proof}

We note that the existence of an $\text{SU}(2)$ representation of the fundamental group will be crucial to the case analysis at the end of our argument. We will use Theorem \ref{thm:su2} in place of Theorem \ref{thm:taubes} as above along with a few facts about the representation theory of the Poincar\'e homology sphere's fundamental group.

%Moreover, this theorem can be extended to the class of manifolds with nontrivial Fr\o yshov $h$-invariant using work of Daemi \cite{daemi}.
 
\section{Surface slicing and cobordisms}
%what is a knotted surface
As stated in the introduction, our examples all come from branched covers of knots, and the knots are obtained using cross-sectional slices of knotted surfaces.

\begin{Def}
Let $F \subset S^4$ be a knotted surface, the image of an embedding $f: \Sigma_g \hookrightarrow S^4$ of a genus $g$ surface $\Sigma_g$. Then we say $K \subset S^3$ is a {\bf slice} of $F$ if $K$ can be realized as the intersection of $F$ and some equatorial $S^3$ in $S^4$.
\end{Def}

%One of the main advantages to this perspective is that there is a natural inclusion of branched covers $\Sigma_n(S^3,K) \subset \Sigma_n(S^4,F)$. This fact has been used to great effect as an obstruction to double slicing of knots, but in the case of our proof, we will use it to describe families of 3-manifolds embeddable in homology 4-spheres.
%could make a citation here?
If we fix a knotted surface whose branched cover is a homology 4-sphere, then we can study its slices to create families of manifolds embeddable in the corresponding homology 4-sphere via branched covers.

\subsection{Slices and broken surface diagrams}

Here we will explore a new combinatorial technique for constructing surface slices.%, which will be discussed in more detail in an upcoming work. 

The main diagrammatic input into our technique will be the {\bf broken surface diagram} of a knotted surface, a higher dimensional analogue of a knot diagram (See \cite{Carter1997KnottedSA} for a more in depth treatment of broken surface diagrams and decker sets). In the case of a knot diagram, we project the image of our embedding of $k:S^1 \hookrightarrow S^3$ to get an immersion $\overline{k}: S^1 \looparrowright S^2$, and recover the isotopy type of the knot using information about the points of self-intersection for the resulting immersed curve. Similarly, for a broken surface diagram we project the image of our embedding $F: \Sigma_g \hookrightarrow S^4$ to an immersion $\overline{F}: \Sigma_g \looparrowright S^3$, and record similar over/under information about the points of self intersection to recover the isotopy type.

To a given broken surface diagram we can also associate a {\bf decker set} diagram, a higher dimensional analogue of the chord diagram we would associate to a knot diagram. A {\bf chord diagram} is a pair $(K_0 = S^1,\{(a_1,b_1),(a_2,b_2),\dots (a_n,b_n)\})$ consisting of a circle $K_0$ with ordered pairs of points $(a_i,b_i)$ in $K_0$  corresponding to the exceptional preimages of $\overline{k}$, with the image of $a_i$ corresponding to the point that is higher in the projected coordinate. Correspondingly, a decker set is a pair $(F_0 = \Sigma_g,\{(\alpha_1,\beta_1),(\alpha_2,\beta_2),\dots (\alpha_n,\beta_n)\})$ consisting of a surface $F_0$ with ordered pairs of immersions $(\alpha_i,\beta_i)$ from $C = S^1$ or $A = [-1,1]$ to $F_0$ representing pairs of circles or pairs of arcs identified by the immersion $\overline{F}$, with $\alpha_i$ over $\beta_i$ in the projected coordinate. One cannot necessarily recover the broken surface diagram, and thus the isotopy class of the embedded surface in $S^4$, from the decker set, but it will be a useful visual aid for our constructions. Moreover, for orientable surfaces in $S^4$ we do not need any double arcs in our presentation \cite[Theorem 4.28]{Carter1997KnottedSA}.

This perspective for understanding knotted surfaces can be somewhat unwieldy, as diagrams of immersed surfaces are difficult to visualize. The key advantage in our case, however, is that broken surface diagrams naturally lie in $S^3$, so curves lying on $\im(\overline{F})$ can naturally be associated with knots. Moreover, we establish the following combinatorial criterion for when these curves actually represent slices of our surface, and the criterion only depends on the decker set. Therefore, we can discuss constructions of slices of knotted surfaces by merely considering curves on $F_0$, i.e.\ an abstract surface with certain decorated curves corresponding to the self intersections of the immersion $\overline{F}$.

\begin{Prop}\label{prop:combin}
Let $F:\Sigma_g \hookrightarrow S^4$ be a knotted surface, let $\overline{F}: \Sigma_g \looparrowright S^3$ (along with the associated crossing data) be a broken surface diagram representation of $F$ and let $(F_0,\{(\alpha_1,\beta_1),(\alpha_2,\beta_2),\dots (\alpha_n,\beta_n)\})$ be the corresponding decker set. Let $\gamma$ be a separating curve on $F_0$, and let $\mathcal{F}_1,\mathcal{F}_2$ be the components of its complement. If $\beta_i^{-1}(\mathcal{F}_1) \subset \alpha_i^{-1}(\mathcal{F}_1)$ for all $i$ or $\alpha_i^{-1}(\mathcal{F}_1) \subset \beta_i^{-1}(\mathcal{F}_1)$ for all $i$, then the curve $\overline{F}(\gamma)$ is a slice of $F$.
\end{Prop}
\begin{proof}
Assume without loss of generality that $\beta_i^{-1}(\mathcal{F}_1) \subset \alpha_{i}^{-1}(\mathcal{F}_1)$ for every $i$. To achieve a broken surface diagram projection, we remove two balls from $S^4$ and then project the resulting $S^3 \times [-1,1]$ to $S^3$. Our goal in this proof is then to reverse this projection process while maintaining $\gamma$ as a slice. Let us visualize $\im(\overline{F})$ as a subset of $S^3\times \{0\} \subset S^3 \times [-1,1]$. By abuse of notation, we will refer to this as $F_0$, with $\mathcal{F}_1$ and $\mathcal{F}_2$ referring to the appropriate subsets. Then we can push the interior of $\mathcal{F}_1$ up in the projected coordinate and the interior of $\mathcal{F}_2$ down in the projected coordinate so that $\mathcal{F}_1$ and $\mathcal{F}_2$ do not intersect in their interiors. %Note that by the condition of the proposition, the two surfaces will not run into each other as we do this, 
Note that by the condition of the proposition, these pushes align with the crossing data of the original surface.

%Note that by the condition of the proposition, projecting back onto $S^3\times \{0\}$ will return the same immersed surface with the same crossing data.

 We can then push the upper sheets of the self intersections of $\mathcal{F}_1$ (i.e. $\mathcal{F}_1 \cap im(\alpha_i)$) even higher in the projected coordinate in order to make them disjoint from their corresponding other sheets. Because $\beta_i^{-1}(\mathcal{F}_1) \subset \alpha_{i}^{-1}(\mathcal{F}_1)$, the lower sheet's intersection with the upper sheet is contained in the interior of $\mathcal{F}_1$, so we can raise the upper sheet above the lower sheet in the projected coordinate. This removes any intersections $\mathcal{F}_1$ has with itself.

We then apply a similar process to $\mathcal{F}_2$, pushing $\mathcal{F}_2 \cap im(\beta_i)$ down in the projected coordinate. We note that $\beta_i^{-1}(\mathcal{F}_1) \subset \alpha_i^{-1}(\mathcal{F}_1)$ for every $i$ implies that $\alpha_i^{-1}(\mathcal{F}_2) \subset \beta_i^{-1}(\mathcal{F}_2)$ for every $i$, so there is no obstruction to such a lowering removing any self intersections.

This results in an embedded surface $F' \subset S^3 \times [-1,1] \subset S^4$ whose intersection with $S^3\times \{0\}$ is $\gamma$ and has $\overline{F}$ as a broken surface diagram. Because broken surface diagrams capture the isotopy type of a surface, we know that $\gamma$ is a slice of $F$.
\end{proof}

\begin{figure}
    \centering
    \includegraphics[width=0.8\linewidth]{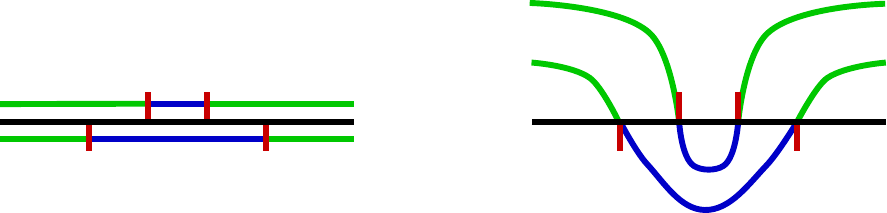}
    \caption{The above figure illustrates the coordinate pushing in Proposition \ref{prop:combin} restricted to an example double arc. In black we have the arc, with the colored arcs above and below representing the two preimages. The points in each preimage are mapped to the black arc via projection to the x coordinate. In red are the intersections of $\gamma$ with the double arc on the above arc and below arc respectively, and the green and blue components of the colored arcs represent $\mathcal{F}_1$ and $\mathcal{F}_2$ respectively. On the right picture we have an embedded version of the picture given by pushing the blue arcs up in the projected coordinate and pushing the green arcs down in the projected coordinate, fixing $\gamma$. We note that the resolved copy is not only embedded, but the prescribed over sheet is above the under sheet.}
    \label{fig:1ddecker}
\end{figure}

In order to understand how to use this proposition, we first need examples of knotted surfaces and decker sets for their broken surface diagrams.
\subsection{Spun knots}
\begin{Ex}\label{ex:spindecker}
For a knot $K \subset S^3$, we will define a knotted surface in $S^4$ called the {\bf spun knot} as follows. In general, we define the spin of a manifold $M$ to be $\partial(\overset{\circ}{M} \times D^2)$. Let $T_K \subset B^3$ be the two-ended tangle obtained from $K$ by removing a ball from $S^3$ containing a trivial arc of $K$. We can then define the spin of $K$ as $\partial(T_K\times D^2) \subset \partial(B^3\times D^2) = S^4$.
%This can also be described as producting the tangle ball with $S^1$ and then capping off with the relevant $S^0\times D^2 \subset S^2\times D^2$.
To project this to $S^3$, we can instead project each tangle ball to $D^2$, as $\partial(D^2\times D^2) \subset \partial(B^3\times D^2) = S^4$ defines an equatorial sphere in $S^4$. Because the projection can be applied equivariantly to each tangle ball, the spin of the knot diagram of $T_K$ gives a broken surface diagram for the spin of $K$. Moreover, we can spin the chord diagram of $T_K$ (a circle with marked points) to get the decker set diagram of the spin of $K$ (a sphere with marked curves as in Figure \ref{fig:chord}). We can get a variety of different broken surface diagrams for our spun knot by varying our choice of knot diagram for $T_K$.
%add a figure for this
\end{Ex}
\begin{figure}
    \centering
    \includegraphics[width = 0.8\textwidth]{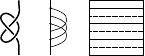}
    \caption{The tangle of the trefoil knot, its corresponding chord diagram (with the arrows pointing to the over crossing), and the decker set of the corresponding spun knot broken surface diagram projected so that latitudinal circles are mapped to horizontal lines. The dotted lines correspond to the lower sheets, with the solid lines being the upper sheets. The $\alpha_i$ and $\beta_i$ for the pairs of curves match up each point in a given latitude line with its corresponding point in the same longitudinal line, and the curves are matched up according to the chord diagram.}
    \label{fig:chord}
\end{figure}

By tracing through the chord diagram twice as in Figure \ref{fig:deckerdouble}, we get that $K\hash \overline{K}$ is a slice of the spin of $K$. In fact, the double of the natural ribbon disc for $K\hash \overline{K}$ is the spin of $K$. 

Using Proposition \ref{prop:combin}, we will show that the green curve on the left side of Figure \ref{fig:deckerdouble} represents a slice of the spin of $K$. Let us denote the interior of the green curve as $\mathcal{F}_1$. Note that $\mathcal{F}_1$ intersects every dotted curve $(\beta_i)$ in a thick region and every solid curve $(\alpha_i)$ in a thin region. This means that $\alpha_i^{-1}(\mathcal{F}_1) \subset \beta_i^{-1}(\mathcal{F}_1)$ for every $i$, so the condition of Proposition \ref{prop:combin} is satisfied (See Figure \ref{fig:1ddecker} for an explicit resolution of these double arcs).

To see that the green curve represents $K\hash \overline{K}$, first note that longitudinal/vertical lines in the decker set are copies of the projection of $T_K$ to the plane. Therefore, a rectangle surrounding the green curve should map into $S^3$ as the projection of $T_K$ times an interval $I$. If we slightly modify a vertical line segment so that it moves in the $I$ coordinate according to the over/under of the crossing data of $K$, we obtain a copy of $T_K$ embedded in our immersed surface. The green curve consists of two such perturbations of vertical line segments with opposite over/under data, connected by horizontal lines. This corresponds to connecting $T_K$ with $T_{\overline{K}}$ as in the right part of Figure \ref{fig:deckerdouble}. 

\begin{figure}
    \centering
    \includegraphics[width = 0.8\textwidth]{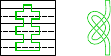}
    \caption{ $K\hash \overline{K}$ drawn on the decker set for the spun trefoil, and the corresponding knot diagram.
    %Note that the immersed disc is embedded in $T_K \times I$, which is the fundamental domain of the decker set diagram. The oscillation of the curve to the left and right corresponds exactly to the over/under crossings in the diagram for $K$, as longitudinal lines of the decker set correspond to the immersed curve representation of $T_K$.
    }
    \label{fig:deckerdouble}
\end{figure}

\begin{Ex} One convenient feature of Proposition \ref{prop:combin} is that it only depends on the way $\mathcal{F}_1$ and $\mathcal{F}_2$ hit the marked curves, so we can arbitrarily modify parts of our slice outside the intersection locus to get new slices from old slices. 
The decker set we used in Example \ref{ex:spindecker} has a number of annuli in the complement of the marked curves, so we can apply a Dehn twist along any of these annuli to get new slices. %complement of the preimage of the intersection locus of the immersed surface
%this is confusing
If you start with the diagram of $K\hash \overline{K}$ in Figure \ref{fig:deckerdouble}, these Dehn twists act by applying a full twist if the twist annulus is the spin of an arc in the knot diagram that touches the unbounded region in the diagram (see Figure \ref{fig:deckersymm} and Figure \ref{fig:cylinder}).

\begin{figure}
    \centering
    \includegraphics[width = 0.8\textwidth]{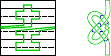}
    \caption{An even symmetric union over the trefoil drawn on the decker set for the spun trefoil, and the corresponding knot diagram in green. The blue arcs show the discs in the canonical ribbon disc for the symmetric union.}
    \label{fig:deckersymm}
\end{figure}
The knot $K\hash \overline{K}$ has a canonical ribbon disc given by $T_K \times I \subset B^3\times I$ (or $\overline{F}(\mathcal{F}_1)$). We can assign this disc a handle decomposition using a tangle diagram of $T_K$ by assigning a 0-handle (or disc) to each over crossing arc of the diagram, and a 1-handle (or band) to each arc connecting these arcs. In this way, each crossing of $T_K$ corresponds to a single ribbon singularity.

If we think of these Dehn twists with this canonical ribbon disc for $K\hash \overline{K}$ in mind, we can think of them as adding full twists to the bands given by our choice of diagram for $T_K$, as in Figure \ref{fig:cylinder}. Adding twists to the bands of the canonical ribbon disc for $K\hash \overline{K}$ gives a class of knots called {\bf symmetric unions} of $K$.
%cite here?
If every band is twisted for an even number of half twists, then we call this an {\bf even} symmetric union. With this in mind, our observations give the following result.
\end{Ex}
\begin{Prop}
Every even symmetric union of $K$ is a slice of the spin of $K$.
\end{Prop}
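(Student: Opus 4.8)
The plan is to deduce the statement directly from Proposition~\ref{prop:combin} together with the observations about Dehn twisting the decker set made in the preceding example. First I would record the base case. Let $\mathcal{F}$ be the decker set of a broken surface diagram for the spin of $K$ obtained by spinning a chosen tangle diagram of $T_K$, and let $\gamma_0 \subset \mathcal{F}$ be the separating curve representing $K\hash\overline{K}$ drawn as in Figure~\ref{fig:deckerdouble}. Since $\gamma_0$ runs through the spun chord diagram twice with the two passes lying on opposite sides of $\gamma_0$, one checks that for the components $\mathcal{F}_1,\mathcal{F}_2$ of $\mathcal{F}\setminus\gamma_0$ we have $f_i(\mathcal{F}_1\cap\alpha_i)\subset\mathcal{F}_1\cap\beta_i$ for every $i$, so Proposition~\ref{prop:combin} applies and $K\hash\overline{K}$ is a slice of the spin of $K$. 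This is the $0$-twist case, i.e.\ adding no twists to any band of the canonical ribbon disc $T_K\times I$.

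Next I would set up the twisting operation precisely. For each arc $a$ of the chosen tangle diagram that meets the unbounded region, the spin of $a$ sits inside $\mathcal{F}$ as an embedded annulus $A_a$ lying in the complement of all the marked curves $\alpha_i,\beta_i$. Let $\tau_a\colon\mathcal{F}\to\mathcal{F}$ be a Dehn twist along $A_a$. Because $\tau_a$ is supported away from the $\alpha_i$ and $\beta_i$, it fixes each marked curve pointwise and leaves each gluing $f_i$ unchanged, so the decker data is unaffected, while $\tau_a(\gamma_0)$ is a new separating curve. Its complement has components $\tau_a(\mathcal{F}_1),\tau_a(\mathcal{F}_2)$, and since $\tau_a(\alpha_i)=\alpha_i$ we get $\tau_a(\mathcal{F}_1)\cap\alpha_i=\tau_a(\mathcal{F}_1\cap\alpha_i)=\mathcal{F}_1\cap\alpha_i$, and likewise for $\beta_i$; hence the hypothesis of Proposition~\ref{prop:combin} still holds and $\tau_a(\gamma_0)$ represents a slice of the spin of $K$. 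As the twist annuli $A_a$ for distinct such arcs are disjoint, the same argument shows that any composition $\tau_{a_1}^{k_1}\cdots\tau_{a_m}^{k_m}$ applied to $\gamma_0$ yields a slice.

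Then I would identify the resulting knots with symmetric unions. Using the handle decomposition of the canonical ribbon disc for $K\hash\overline{K}$ in which each over-crossing region of the tangle diagram is a $0$-handle and each connecting arc is a band, one checks — this is the content of Figures~\ref{fig:deckersymm} and~\ref{fig:cylinder} — that applying $\tau_a^{k}$ to $\gamma_0$ has the effect on the associated knot of inserting $k$ full (that is, $2k$ half) twists into the band corresponding to $a$, the hypothesis that $a$ touches the unbounded region being exactly what keeps this modification a clean band twist rather than something more complicated. By definition, an even symmetric union of $K$ is a knot obtained from $K\hash\overline{K}$ by inserting an integer number of full twists into each band of this canonical ribbon disc; choosing the exponents $k_j$ to match the prescribed twisting realizes an arbitrary even symmetric union as $\tau_{a_1}^{k_1}\cdots\tau_{a_m}^{k_m}(\gamma_0)$, which we have just shown is a slice of the spin of $K$.

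I expect the middle step to be the main obstacle: carefully matching the three pictures — a Dehn twist along a spun annulus in the decker set, the handle/band picture of the canonical ribbon disc, and the classical symmetric-union twisting move — and in particular verifying that for a suitable tangle diagram every band of the canonical ribbon disc is (the spin of) an arc touching the unbounded region, and that twists on distinct bands are independent because the corresponding annuli are disjoint. The input from Proposition~\ref{prop:combin} and the base case are essentially immediate; it is this translation between the combinatorial decker-set description and the usual symmetric-union diagram that requires care, and is what Figures~\ref{fig:deckersymm} and~\ref{fig:cylinder} are meant to make visible.
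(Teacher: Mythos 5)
Your proposal is correct and takes essentially the same route as the paper. In fact, the paper gives no formal proof of this proposition at all—it is stated as an immediate consequence of the preceding example, which (i) notes that $K\hash\overline{K}$ drawn as in Figure~\ref{fig:deckerdouble} satisfies the hypothesis of Proposition~\ref{prop:combin}, (ii) observes that a Dehn twist along an annulus in the complement of the marked curves leaves that hypothesis intact, and (iii) identifies those Dehn twists with full twists on the bands of the canonical ribbon disc for $K\hash\overline{K}$. Your three steps are a tidied-up version of exactly this chain, including the correct observation that $\tau_a(\mathcal{F}_1)\cap\alpha_i=\mathcal{F}_1\cap\alpha_i$ since $\tau_a$ fixes $\alpha_i$ pointwise, so the combinatorial criterion is unchanged. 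You also correctly flag the one place where the paper is informal—the translation from ``Dehn twist along a spun annulus'' to ``full twist on a band,'' including the requirement that the relevant arc meet the unbounded region (which can always be arranged, e.g.\ by taking $T_K$ in bridge/plat position, as the caption of Figure~\ref{fig:final} hints)—and correctly notes that the paper delegates this to Figures~\ref{fig:deckersymm} and~\ref{fig:cylinder} rather than arguing it in prose.
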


The diagram of $K \hash \overline{K}$ in Figure \ref{fig:deckerdouble} is more suggestive of its ribbon disc, but if we unfold the two connect summands, we get a more standard, crossing number minimizing example. If we do a similar unfolding process with the diagram of the symmetric union in Figure \ref{fig:deckersymm}, we get a diagram as in Figure \ref{fig:unfolded}, which is similar to the diagrams one would get using the definition of symmetric unions of \cite{Lamm2000SymmetricUA}. In this way, we can see the two definitions are equivalent, but our definition will be more suitable for the examples in this paper.

\begin{figure}
    \centering
    \includegraphics[width = 0.4\textwidth]{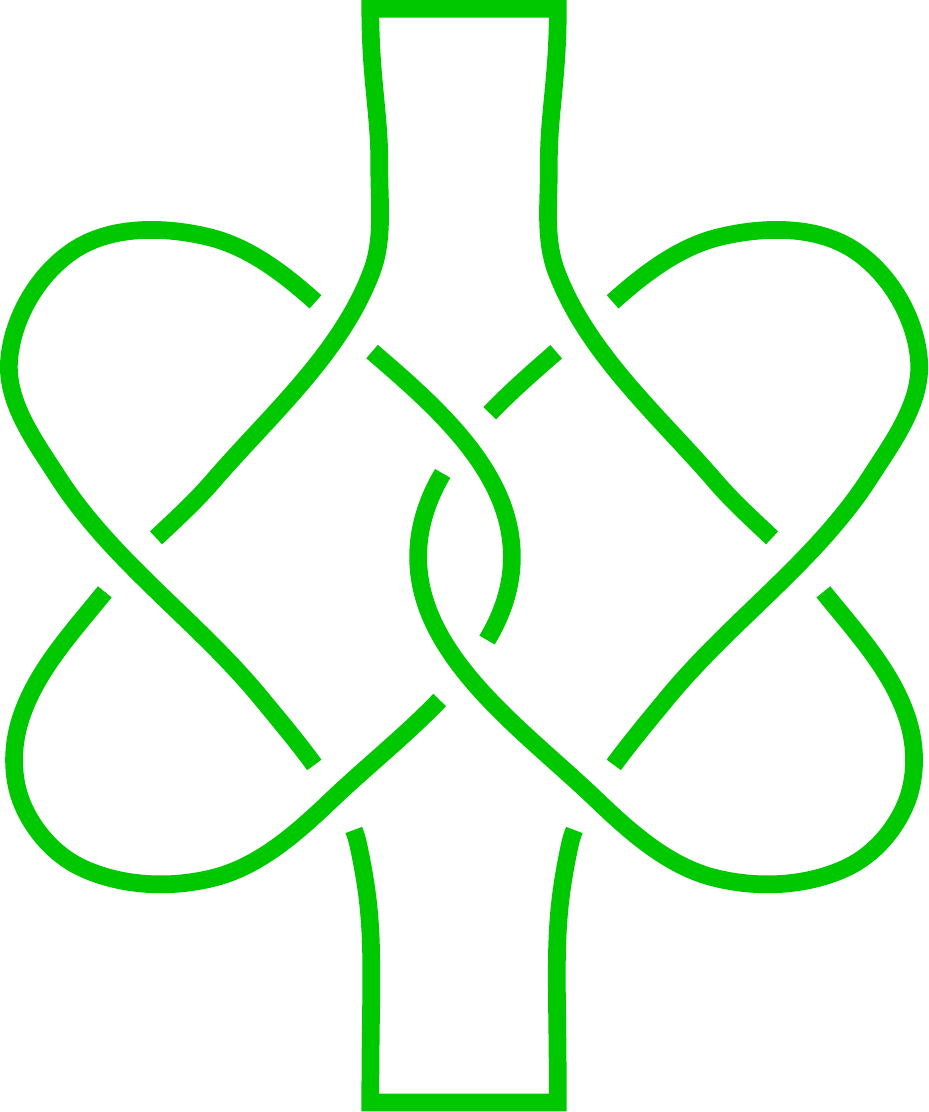}
    \caption{The unfolded symmetric union}
    \label{fig:unfolded}
\end{figure}

\subsection{Cobordisms}
%For the purposes of our proof, we will set 

%We can think of this family of knots as generated from $K\hash \overline{K}$ and its natural ribbon disc by adding full twists to the bands.
The convenient thing about even symmetric unions is that we can realize these full twists to the bands by doing $\pm 1$-surgeries to meridians of the bands, as in Figure \ref{fig:cylinder}. Because the homotopy class of each of these surgery curves is a product of two meridians (namely, a meridian of $K$ and its corresponding meridian in $\overline{K}$), the curves lift to two curves in the double branched cover. Therefore, we can attach 2-handles along these curves on one side of $\Sigma_2(S^3, K\hash \overline{K})\times I$ to get us a cobordism between the double branched covers of $K\hash \overline{K}$ and our symmetric unions. Moreover, these surgeries preserve the property of being a homology 3-sphere, which will be seen in Lemma \ref{Lem:cob} (cf. \cite[Theorem 2.6]{Lamm2000SymmetricUA}). 

%add equals and the framing
\begin{figure}
    \centering
    \labellist
    \small\hair 2pt
    \pinlabel $\simeq$ at 63 25
    \pinlabel $\simeq$ at 98 25
    \pinlabel -1 at 120 15
    \endlabellist
    \includegraphics[width = 0.8\textwidth]{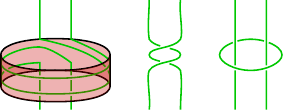}
    \caption{A local picture demonstrating how the Dehn twist around an annulus acts by applying a full twist, and how such a transformation is equivalent to $-1$ Dehn surgery on a meridian of the band.}
    \label{fig:cylinder}
\end{figure}

\section{The Proof}

\subsection{Definiteness of cobordisms}

\begin{Lem} \label{Lem:cob}
Assume $K$ is a knot whose double branched cover is an integer homology sphere. Then the intersection form of the cobordism $W$ associated to an even symmetric union on $K$ from its double branched cover to the double branched cover of $K\hash \overline{K}$ is represented by a diagonal matrix with $\pm 1$'s on the diagonal. In particular, the cobordism is definite if and only if all of the twist regions are of the same sign. Moreover, the double branched cover over these even symmetric unions is also an integer homology sphere.
\end{Lem}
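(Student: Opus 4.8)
The plan is to describe $W$ concretely as a handle cobordism, identify its intersection form with the linking matrix of the attaching circles, and then reduce the problem to two local computations inside the double branched cover. Write $Y=\Sigma_2(S^3,K\hash\overline{K})$ and let $\pi\colon Y\to S^3$ be the branched covering. The even symmetric union is obtained from $K\hash\overline{K}$ by $\pm 1$-surgery on meridians $c_1,\dots,c_n$ of the bands, as in Figure~\ref{fig:cylinder}. Since each $c_j$ is a product of two meridians of $K\hash\overline{K}$ it misses the branch locus and lifts to two disjoint circles $\widetilde{c}_j^{\,1},\widetilde{c}_j^{\,2}\subset Y$, each mapped homeomorphically onto $c_j$; the surgery solid torus of $c_j$ likewise lifts to two disjoint solid tori. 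Hence $W$ is precisely $Y\times I$ with $2n$ two-handles attached to $Y\times\{1\}$ along $\widetilde{c}_1^{\,1},\widetilde{c}_1^{\,2},\dots,\widetilde{c}_n^{\,1},\widetilde{c}_n^{\,2}$, the framing on $\widetilde{c}_j^{\,k}$ being the pullback of the $\pm 1$-framing used on $c_j$. (Equivalently, $W$ is the double cover of $(S^3\times I)\cup(\text{2-handles along }c_j\times\{1\})$ branched over $(K\hash\overline{K})\times I$.)

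Since $Y$ is a rational homology sphere (a double branched cover of a knot), each $\widetilde{c}_j^{\,k}$ is rationally null-homologous and bounds a rational $2$-chain in $Y\times\{1\}$; capping the $2n$ handle cores with these chains gives a basis of $H_2(W;\bq)$ with respect to which the intersection form is the linking matrix $\big(\operatorname{lk}_Y(\widetilde{c}_j^{\,k},\widetilde{c}_{j'}^{\,k'})\big)$, the diagonal entries being the self-framings of the $\widetilde{c}_j^{\,k}$ in $Y$ (when $Y$ is an integer homology sphere this holds over $\bz$, with $H_2(W;\bz)\cong\bz^{2n}$). The lemma is then equivalent to the statement that this matrix is block diagonal with one $2\times 2$ block $\varepsilon_j\left(\begin{smallmatrix}1&0\\0&1\end{smallmatrix}\right)$ per band, where $\varepsilon_j=\pm 1$ is the sign of the surgery on $c_j$.

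Both needed computations are geometric. For $j\neq j'$ the meridians $c_j,c_{j'}$ of distinct bands bound disjoint disks $D_j,D_{j'}$ in $S^3$ each meeting $K\hash\overline{K}$ transversely in two points and otherwise disjoint from it, so $A_j:=\pi^{-1}(D_j)$ and $A_{j'}:=\pi^{-1}(D_{j'})$ are disjoint embedded annuli in $Y$ with $\partial A_j=\widetilde{c}_j^{\,1}\cup\widetilde{c}_j^{\,2}$; disjointness of the annuli forces every cross term $\operatorname{lk}_Y(\widetilde{c}_j^{\,k},\widetilde{c}_{j'}^{\,k'})$ to vanish. For a single band, a ball neighborhood of $D_j$ meets the branch set in a trivial two-string tangle, so its preimage is a solid torus $V_j\subset Y$ in which $A_j$ is a vertical splitting annulus and $\widetilde{c}_j^{\,1},\widetilde{c}_j^{\,2}$ are parallel longitudes; pushing the $\pm 1$-framing of $c_j$ through the covering then computes both the self-framing of each $\widetilde{c}_j^{\,k}$ and the off-diagonal term $\operatorname{lk}_Y(\widetilde{c}_j^{\,1},\widetilde{c}_j^{\,2})$ in terms of the framing that $A_j$ induces on its core. \emph{The crux, and the step I expect to be delicate, is to show that $A_j$ induces the Seifert (zero) framing} --- equivalently that $\operatorname{lk}_Y(\widetilde{c}_j^{\,1},\widetilde{c}_j^{\,2})=0$ and that the surgery framing of $c_j$ pulls back to the Seifert framing of $\widetilde{c}_j^{\,k}$ in $Y$ --- for then the block is exactly $\varepsilon_j\left(\begin{smallmatrix}1&0\\0&1\end{smallmatrix}\right)$. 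The cleanest route I see is to isotope $D_j$ rel $\partial$ to a surface in $S^3$ whose two lifts are \emph{disjoint} Seifert surfaces for $\widetilde{c}_j^{\,1}$ and $\widetilde{c}_j^{\,2}$ in $Y$, exploiting the product-of-meridians description of $c_j$; both facts are then immediate.

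The two conclusions follow. With each block equal to $\varepsilon_j I_2$, the intersection form of $W$ is $\bigoplus_{j=1}^{n}\varepsilon_j I_2$, which is positive or negative definite precisely when all of the $\varepsilon_j$ agree, i.e. when all twist regions carry the same sign. And if $Y$ is an integer homology sphere, then the double branched cover of the even symmetric union is obtained from $Y$ by surgery on the null-homologous framed link $\{\widetilde{c}_j^{\,k}\}$, so its first homology is $\operatorname{coker}\big(\bigoplus_j\varepsilon_j I_2\big)=0$; being also a rational homology $3$-sphere, it is an integer homology sphere.
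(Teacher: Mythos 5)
Your approach is genuinely different from the paper's: you stay entirely in the $3$-manifold $Y$ and try to read off the intersection form as a linking matrix of the lifted curves, whereas the paper works $4$-dimensionally. Unfortunately, two of your key steps have gaps.

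First, the cross-term argument does not work. Having disjoint annuli $A_j$ and $A_{j'}$ with $\partial A_j = \widetilde c_j^{\,1}\cup\widetilde c_j^{\,2}$ does \emph{not} force the four individual linking numbers $\operatorname{lk}_Y(\widetilde c_j^{\,k},\widetilde c_{j'}^{\,k'})$ to vanish. An annulus with two boundary components is not a Seifert surface for either component; disjointness of $A_j$ from $A_{j'}$ only yields the single relation $\operatorname{lk}(\widetilde c_j^{\,1}+\widetilde c_j^{\,2},\ \widetilde c_{j'}^{\,1}+\widetilde c_{j'}^{\,2})=0$ (up to orientation signs). A chain of two disjoint unknotted annuli in $S^3$ already shows that individual cross linking numbers can be nonzero in exactly this situation.

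Second, the step you flag as the crux is indeed unresolved, and your proposed fix cannot work as stated: $D_j$ necessarily meets the branch set $K\hash\overline K$ transversely in two points (since $c_j$ links the band), so for \emph{every} disk isotopic to $D_j$ rel $\partial$, the preimage $\pi^{-1}(D_j)$ is a single connected annulus, never two disjoint surfaces. You would have to pass to a genuinely different Seifert surface for $c_j$ lying in $S^3\setminus(K\hash\overline K)$ (one exists, since $c_j$ has linking number zero with the knot), and then separately verify that its preimage is disconnected---which is not automatic, since it depends on the mod $2$ homology class of the surface in the knot complement.

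The paper avoids both problems by first gluing onto $W$ the double branched cover of the ribbon disc for $K\hash\overline K$ (a homology ball, so the intersection form is unchanged). In the resulting base $4$-manifold the bands lie in the pushed-in disc, so each $c_j$ now bounds a disk \emph{in the complement of the branch locus}, and these disks may be taken pairwise disjoint. Capping with the $2$-handle cores produces pairwise disjoint $\pm 1$-framed spheres away from the branch set; each lifts to two disjoint spheres in the cover, one for each generator of $H_2$, and the diagonal form is then immediate with no linking-number computation. The key geometric fact that makes this go---that the meridians of the bands become null-homotopic in the complement of the ribbon disc---is precisely what your in-$Y$ approach has to re-derive by hand, and where your argument breaks down.
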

\begin{proof}

The cobordism we construct is obtained from $S^3 \times [-1,1]$ by attaching $\pm 1$ framed 2-handles to $S^3 \times \{1\}$ and then taking a branched cover with branch set equal to $K \hash \overline{K} \times I$. The resulting branched cover can be similarly decomposed as $\Sigma_2(S^3,K \hash \overline{K}) \times I$ with 2-handles glued to the $\Sigma_2(S^3,K \hash \overline{K}) \times \{1\}$ end of it, where the framings and numbers of those handles are governed by the linking of the attaching circles with the branch set. Because the attaching circles are meridians of the cores of the bands, the linking number is 0, so each attaching curve will lift to two curves in the cover. This means that each 2-handle will lift to two 2-handles, and the framing will remain the same. Moreover, $\Sigma_2(S^3,K \hash \overline{K}) \times I$ has no second homology, so the homology is entirely generated by the cores of the 2-handles together with appropriate nullhomologous surfaces in $\Sigma_2(S^3,K \hash \overline{K}) \times I$.

To aid in the computation, we will cap off $S^3 \times \{-1\}$ with $B^4$ downstairs, extending the branch set to $B^4$ with the canonical ribbon disc for $K \hash \overline{K}$. We note that this does not add any second homology to the computation, as the double cover of this ball over the disc is a homology ball. However, it will allow us to see the generators of second homology more clearly. The meridians of the bands will be slice in the complement of the disc as after the band attachments there will be nothing in the way of the canonical slice disc for the unknot (refer to the right hand side of Figure \ref{fig:cylinder}). Therefore, we can cap off these framed 2-handle attachments with these discs in $B^4$. The framing of these resulting spheres is the same as the framing of the handle, they are pairwise disjoint, and they are disjoint from the branch set, so they will each lift to two pairwise disjoint spheres of the same framing. Therefore, the intersection form will be diagonal, with diagonal entries governed by the choice of twists as desired. Moreover, the lack of linking shows that each $\pm 1$ framed 2-handle attachment will preserve the homology of the 3-manifold, so both ends of the cobordism are homology spheres. Compare this proof to that of Cochran and Lickorish \cite{Cochran1986UnknottingIF}, noting that our situation is a bit simpler because the branch set has no singularities.
\end{proof}

\subsection{The fundamental group criterion}

\begin{Lem}
\label{lem:gp}
For an infinite number of the cobordisms associated to even symmetric unions over $K$, the fundamental group of the cobordism is $\pi_1(\Sigma_2(S^3,K))$. Moreover, infinitely many of them are definite.
\end{Lem}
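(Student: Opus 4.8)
The plan is to realize $\pi_1(W)$ as a quotient of a free product and then pin down the relators. Write $Y:=\Sigma_2(S^3,K)$ and $\Gamma:=\pi_1(Y)$. Since $\Sigma_2(S^3,K\hash\overline K)=Y\hash\overline Y$, van Kampen gives $\pi_1(Y\hash\overline Y)=\Gamma_1*\Gamma_2$, where $\Gamma_1\cong\Gamma_2\cong\Gamma$ are the fundamental groups of the two punctured summands $Y\setminus B^3$ and $\overline Y\setminus B^3$. As $W$ is obtained from $(Y\hash\overline Y)\times I$ by attaching $2$-handles along the two lifts $\widetilde c_i^{\,(1)},\widetilde c_i^{\,(2)}$ of each band-meridian surgery curve $c_i$, we get $\pi_1(W)=(\Gamma_1*\Gamma_2)/\langle\langle\,[\widetilde c_i^{\,(1)}],[\widetilde c_i^{\,(2)}]\,\rangle\rangle$. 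I will also use that $\Sigma_2(B^3\times I,\,T_K\times I)\cong(Y\setminus B^3)\times I$, from which one reads off that the inclusion of the $Y\hash\overline Y$-boundary realizes the \emph{fold} homomorphism $F\colon\Gamma_1*\Gamma_2\to\Gamma$ that is the identity on $\Gamma_1$ and the mirror identification $\iota^{-1}$ on $\Gamma_2$; its kernel is normally generated by $\{\gamma\,\iota(\gamma)^{-1}:\gamma\in\Gamma_1\}$. The aim is to show that for the symmetric unions used in the construction the relators $[\widetilde c_i^{\,(j)}]$ normally generate exactly $\ker F$, so that $\pi_1(W)=(\Gamma_1*\Gamma_2)/\ker F=\Gamma$.

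For this I take the tangle diagram of $T_K$ coming from a bridge position of $K$ and twist the bands dual to the bridges. Then the $i$-th surgery curve $c_i$ is isotopic, in the complement of $K\hash\overline K$, to a product $\mu_i\,\overline{\mu}_i^{\,-1}$ of the $i$-th bridge meridian of $K$ with the corresponding bridge meridian of $\overline K$ (so $\mathrm{lk}(c_i,K\hash\overline K)=0$, in agreement with $c_i$ bounding a disc in the complement of the ribbon disc). Since $\mu_i\mu_1^{-1}$ has even linking number with $K\hash\overline K$ it lifts to an honest loop $\lambda_i\in\Gamma_1$; writing $\mu_i=(\mu_i\mu_1^{-1})\mu_1$ and using a fixed lift of $\mu_1$ as the reference arc between the two lifts of the basepoint, a careful trace through the branched double cover identifies $[\widetilde c_i^{\,(1)}]$ with $\lambda_i\,\iota(\lambda_i)^{-1}$ and $[\widetilde c_i^{\,(2)}]$ with its image under the covering involution. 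In particular each relator lies in $\ker F$.

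For the reverse inclusion I use that the $b$ bridge meridians generate $\pi_1(S^3\setminus K)$, so a Reidemeister--Schreier computation with Schreier transversal $\{1,\mu_1\}$ shows that $\Gamma=\pi_1(Y)$ is generated by $\lambda_2,\dots,\lambda_b$ together with their images under the covering involution --- and both families appear among the relators $[\widetilde c_i^{\,(j)}]$. Combined with the elementary fact that $\{\gamma_k\,\iota(\gamma_k)^{-1}\}$ normally generates $\ker F$ whenever $\{\gamma_k\}$ generates $\Gamma_1$, this gives $\langle\langle\,[\widetilde c_i^{\,(j)}]\,\rangle\rangle=\ker F$ and hence $\pi_1(W)\cong\Gamma=\pi_1(\Sigma_2(S^3,K))$. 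Finally, fixing the bridge diagram and adding $2n$ half-twists to each bridge band ($n=1,2,\dots$) produces infinitely many distinct even symmetric unions; the surgery curves, hence their lifts and hence the normal closure above, are independent of $n$, which yields infinitely many cobordisms with the asserted fundamental group.

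The step I expect to be the main obstacle is the middle one: precisely computing the homotopy classes of the lifted surgery curves $\widetilde c_i^{\,(j)}$ inside $\Gamma_1*\Gamma_2$ --- in particular confirming the mirror-symmetric shape $\lambda_i\,\iota(\lambda_i)^{-1}$ with no uncontrolled factor in $\Gamma_2$ --- while at the same time choosing the tangle diagram (namely a bridge position) so that the resulting loops $\lambda_i$ and their involution-images actually generate $\pi_1(Y)$. These two ingredients, the branched-cover $\pi_1$-bookkeeping and the diagrammatic choice, have to be arranged together; the remaining steps are then formal applications of van Kampen and elementary group theory.
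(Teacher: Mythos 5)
Your argument is correct but takes a genuinely different route from the paper's. The paper works downstairs in the knot exterior: $\pi_1\bigl(S^3\setminus(K\hash\overline K)\bigr)$ is an amalgamated free product of two copies of $\pi_1(S^3\setminus K)$ over a meridian, each added $2$-handle imposes the single relation $\mu_i=\overline\mu_i$, and twisting along bands whose meridians generate the knot group collapses the amalgam to one factor, after which the statement about the branched cover is read off in a single transfer step at the end (take the index-two cover and refill the lifted solid torus). You instead work upstairs in the branched cover from the start: $\pi_1(Y\hash\overline Y)=\Gamma_1*\Gamma_2$, the $2$-handle relators are the two lifts of each surgery curve, and you identify them explicitly, via Reidemeister--Schreier with transversal $\{1,\mu_1\}$, as elements $\lambda_i\,\iota(\lambda_i)^{-1}$ that normally generate the kernel of the fold map $\Gamma_1*\Gamma_2\to\Gamma$ determined by $\Sigma_2(B^3\times I,T_K\times I)\cong(Y\setminus B^3)\times I$. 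The conclusions agree; the paper's route sidesteps exactly the lifting bookkeeping you flag as your main obstacle by keeping all the combinatorial group theory in $S^3$, while your route makes the fold-map structure on $\Gamma_1*\Gamma_2$ explicit, which is conceptually appealing but requires the careful trace of surgery curves through the branched cover. One small point to tighten: adding $2n$ half-twists to a band with $n>1$ is realized by $n$ parallel $\pm1$-framed $2$-handles rather than a single curve with framing $\pm1/n$, so the ``surgery curves, hence lifts, hence normal closure independent of $n$'' step should observe that the extra parallel curves are freely homotopic to the original ones inside the cobordism and therefore contribute only redundant relators.
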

\begin{proof} We obtain the cobordism associated to a given even symmetric union by attaching 
2-handles to $\Sigma_2(S^3,K\hash \overline{K})\times I$ corresponding to the appropriate twists in the bands. We can instead obtain the cobordism by attaching 2-handles to $(S^3\setminus \nu(K\hash \overline{K})) \times I$, taking the appropriate double cover and then reattaching the solid torus in each level set. If we can prove that the cobordism before the cover has fundamental group equal to $\pi_1(S^3\setminus K)$, then in the cover the cobordism will have fundamental group $\pi_1(\Sigma_2(S^3,K))$.

Fix a generating set $(m_1, \dots m_n)$ for $\pi_1(S^3\setminus K)$ consisting of meridians. The group $\pi_1(S^3\setminus (K\hash \overline{K}))$ is an amalgamated product of two copies of $\pi_1(S^3\setminus K)$, which is a free product with an added relation of identifying a fixed meridian in one copy with the corresponding meridian in the other copy (This can be seen by applying the Seifert-Van Kampen theorem to the decomposition with the splitting sphere as the intersection). We denote these two copies by $A_1 = \langle m_1, \dots, m_n \rangle$ and $A_2 = \langle m_1', \dots m_n'\rangle$, and say that $\pi_1(S^3\setminus (K\hash \overline{K})) = \langle m_1, \dots, m_n, m_1', \dots, m_n'| m_n = m_n'\rangle$ . 

The 2-handles added in the cobordism from $K\hash \overline{K}$'s complement to the symmetric union complement add a relation of identifying a meridian in $A_1$ with the corresponding meridian in $A_2$. Because knot groups are generated by meridians, we can add twists to identify each $m_i$ with its corresponding $m_i'$. After these attachments, $\pi_1$ of the cobordism is $\pi_1(S^3\setminus K)$, meaning that in the cover the cobordism will have fundamental group equal to $\pi_1(\Sigma_2(S^3,K))$. The framings of the 2-handles are irrelevant for this calculation, so we may assume they are all of the same sign, which by Lemma \ref{Lem:cob} means they can be made definite.
\end{proof}

\begin{Rem}
Note that the knot in Figure \ref{fig:final} is sufficent to satisfy this condition, as a full twist is added to each bridge arc in a bridge presentation of $T_{3,5}$.

\end{Rem}

\begin{proof}[Proof of Theorem \ref{mainthm}]

If we set $K = T_{3,5}$, then $\Sigma_2(S^3,K) = P$, the Poincar\'e homology sphere, which satisfies the necessary conditions for $\Sigma$ in Theorem \ref{thm:su2}. Moreover, the branched double cover of the spin of $K$ will be the spin of $P$ (i.e. $\partial((P\setminus B^3) \times D^2)$ = $(P\setminus B^3) \times S^1 \cup S^2 \times D^2$), which is a homology 4-sphere by Mayer-Vietoris. Therefore, it suffices for the proof of Theorem \ref{mainthm} to prove that for some family of even symmetric unions with base knot $T_{3,5}$ the associated cobordisms are definite, and that we can guarantee the resulting manifold cannot admit an appropriate $\text{SU}(2)$ representation after capping it off.

Let $J$ be an even symmetric union on $K$ such that the associated cobordism $W$ from $\Sigma_2(S^3,K\hash \overline{K})$ to $\Sigma_2(S^3,J)$ is definite and $\pi_1(W) = \pi_1(P)$ (See Figure \ref{fig:final} for example). Let $Y = \Sigma_2(S^3,J)$. Assume that $Y$ embeds smoothly into a homotopy 4-sphere $S$, and let $X_1, X_2$ be the two integer homology balls it bounds which together compose $S$. 

Because $X_1$ and $X_2$ are integer homology balls, they don't affect the definiteness of the intersection form, so it suffices to show for the proof that gluing one of the two $X_i$ to $W$ will result in a manifold with no appropriate $\text{SU}(2)$ representation. To do this, let $W_i$ be the space obtained by gluing $X_i$ to $W$ along $Y$, and let $T$ be the singular space obtained by gluing both $X_i$ to $W$ along $Y$ (see Figure \ref{fig:triad}). Let $G_i$ be the image of the inclusion induced map $\iota:\pi_1(W) \mapsto \pi_1(W_i)$.

\begin{figure}
    \centering
    \labellist
    \small\hair 2pt
    \pinlabel $X_1$ at 57 50
    \pinlabel $X_2$ at 57 15
    \pinlabel $W$ at 21 32
    \endlabellist
    \includegraphics[width = 0.4\textwidth]{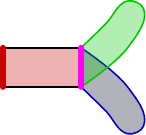}
    \caption{A schematic of the singular space $T$. The pink line segment represents $Y$ and the red segment represents $P \hash \overline{P}$.  }
    \label{fig:triad}
\end{figure}
Because $W$ is built from $Y\times I$ by attaching 2-handles, we know that $\pi_1(W)$ is a quotient of $\pi_1(Y)$.  Because the $X_i$ are submanifolds of $S$, we also know that $\pi_1(W_i)$ is normally generated by $G_i$, which is a quotient group of $\pi_1(P)$ by Lemma \ref{lem:gp}. Moreover, because the $X_i$ glue together to form $S$, we know that $\pi_1(T) = 1$, as it is constructed by gluing on thickened 2 cells to $S$. By Seifert-van Kampen, we then know that $\pi_1(W_i)$, $\pi_1(P)$, and $1$ fit into a pushout diagram as in Figure \ref{fig:pushout}. Moreover, we know that $\pi_1(P) = \text{SL}_2(\mathbb{F}_5)$, the group of 2 by 2 matrices over the field of 5 elements with determinant 1. 

\begin{figure}
    \centering
\begin{tikzcd}
\text{SL}_2(\mathbb{F}_5) \arrow[d,"f"'] \arrow[r,"g"] &
\pi_1(W_1) \arrow[d,"\beta"']  \\
\pi_1(W_2) \arrow[r,"\alpha"] &
1 \\
\end{tikzcd}
    \caption{The pushout diagram containing $\pi_1(W_1)$ and $\pi_1(W_2)$.}
    \label{fig:pushout}
\end{figure}

The group $\text{SL}_2(\mathbb{F}_5)$ has only the alternating group $A_5$ as a nontrivial quotient, so we can conclude our proof with a case analysis on $G_i$: 

{\bf Case 1: At least one $G_i = 1$}

If $G_i = 1$, then $\pi_1(W_i) = 1$, and $W_i$ is the definite cobordism required for our obstruction, as $\pi_1(W_i)$ admits no nontrivial representations.

{\bf Case 2: A least one $G_i = A_5$}

If $G_i = A_5$, then $W_i$ does not admit a nontrivial $\text{SU}(2)$ representation that extends nontrivially to the boundary. This is because $A_5$ does not admit a nontrivial $\text{SU}(2)$ representation (as it is a simple group and not the fundamental group of a spherical 3-manifold), and $G_i$ is the image of $\pi_1(\Sigma \hash \overline{\Sigma})$ in $W_i$.

{\bf Case 3: $G_i = \text{SL}_2(\mathbb{F}_5)$ for both $i$}

The last possible case is that $G_1 = G_2 = \text{SL}_2(\mathbb{F}_5)$. In this case, both of our initial maps ($f$ and $g$) in our pushout are injective. In this case, the associated free amalgamated product at the end of the commutative diagram cannot be the trivial group, as $\alpha$ and $\beta$ must be injective as well by the normal form theorem for free products with amalgamation \cite[p.187]{LyndonSchupp}. The resulting contradiction completes the case analysis.

To truly complete the proof, we need to show that there are infinitely many distinct manifolds among the infinite number of different branched covers of twist spins with definite associated cobordism. For fixed twist regions, we notice that all even symmetric unions as in the left of Figure \ref{fig:infmany} can be changed to the unknot via the same four rational tangle replacements shown on the right of Figure \ref{fig:infmany}, with the only change being the induced framings on two of the tangle replacements. This means that via the Montesinos trick, we can write them all as surgeries on the same 4-component link, where the framings of two of the components vary according to the number of twists. Namely, two of the components are 0-framed, and the other two are $2n$ and $2m-1$, where $n$ and $m$ correspond to the number of full twists in the symmetric union diagram. We note that as long as both $n$ and $m$ are of the same sign, this surgery diagram is one of a manifold that embeds in a homology sphere but not a homotopy sphere.

\begin{figure} [h]
\begin{subfigure}{.5\textwidth}
  \centering
  \includegraphics[width=.8\linewidth]{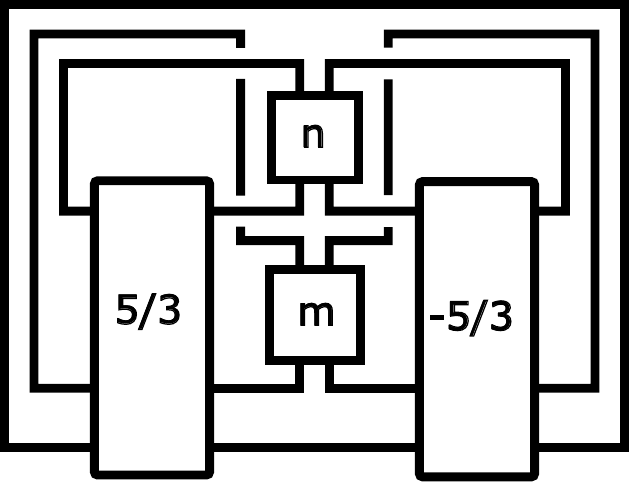}
  \caption{}
  \label{fig:sfig1}
\end{subfigure}%
\begin{subfigure}{.5\textwidth}
  \centering
  \includegraphics[width=.8\linewidth]{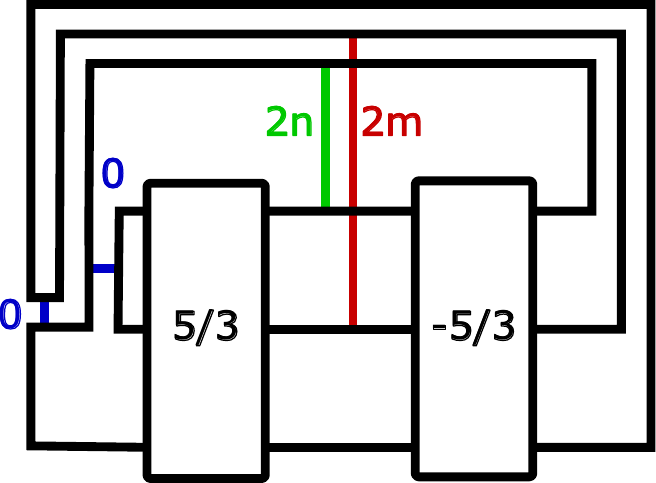}
  \caption{}
  \label{fig:sfig2}
\end{subfigure}
\caption{Left: A symmetric union of $T_{3,5}$ . The numbers in the boxes represent full twists. Right: The four band attachments in blue, red and green that give the symmetric union from the unknot. The color coded numbers represent the difference of the framings of the bands in half twist from the blackboard framing. To realize the double cover of the knot as a surgery on a 4-component link, take the double cover on the black unknot, and the core arcs of the bands will all lift to knots, with framings prescribed by the framings of the arcs.}
\label{fig:infmany}
\end{figure}

From here, we use Snappy \cite{SnapPy} and Regina \cite{regina} to verify that the complement of the 4th component is hyperbolic if we do $0,0$ and $2$ surgeries to the other components. Because we can range over all positive integers for $m$, we know by a result of Neumann and Zagier \cite[Theorem 1A]{MR815482} that the hyperbolic volume of some subsequence of these surgeries will approach the volume of the cusped manifold monotonically. This means that for some subsequence of $m$'s, the manifolds will all be hyperbolic and of distinct volume, meaning they are all distinct manifolds. This completes the proof.
\end{proof}

\bibliographystyle{plain}
\bibliography{slicehom}

\end{document}